\newtheorem{theorem}{Theorem}[section]
\newtheorem{proposition}[theorem]{Proposition}
\newtheorem{corollary}[theorem]{Corollary}
\theoremstyle{definition}
\theoremstyle{remark}
\newtheorem{remark}[theorem]{Remark}
\numberwithin{equation}{section}
\newcommand{\A}{{\mathfrak A}}
\newcommand{\B}{{\mathfrak B}}
\newcommand{\M}{\mathcal{M}}
\newcommand{\N}{\mathcal{N}}
\newcommand{\X}{\mathcal{X}}
\newcommand{\Ha}{\mathcal{H}}
\newcommand{\ip}[2]{\langle#1,#2\rangle}
\newcommand{\abs}[1]{\lvert#1\rvert}
\begin{document}

\title[Perturbation of the Wigner equation]{Perturbation of the
Wigner equation in inner product $C^*$-modules}

\author[J.\ Chmieli\'nski]{Jacek Chmieli\'nski}
\address{Institute of Mathematics,
Pedagogical University of Cracow\\
Pod\-cho\-r\c{a}\-\.{z}ych 2, 30-084 Krak\'{o}w, Poland}

\email{jacek@ap.krakow.pl}

\author[D.\ Ili\v{s}evi\'c]{Dijana Ili\v{s}evi\'c}
\address{Department of Mathematics, University of Zagreb\\
Bijeni\v{c}ka 30, P.O. Box 335, 10 002 Zagreb, Croatia}

\email{ilisevic@math.hr}

\author[M.S.\ Moslehian]{Mohammad~Sal~Moslehian}
\address{%
Department of Mathematics, Ferdowsi University of Mashhad\\
P. O. Box 1159, Mashhad 91775, Iran; \newline Centre of Excellence
in Analysis on Algebraic Structures (CEAAS), Ferdowsi University of Mashhad, Iran.\\
} \email{moslehian@ferdowsi.um.ac.ir}

\author[Gh.\ Sadeghi]{Ghadir Sadeghi}
\address{%
Department of Mathematics, Ferdowsi University of Mashhad\\
P. O. Box 1159, Mashhad 91775, Iran\\
Banach Mathematical Research Group (BMRG), Mashhad, Iran.}

\email{ghadir54@yahoo.com}

\subjclass[2000]{Primary 46L08; Secondary 39B52, 39B82}

\keywords{Wigner equation, inner product $C^*$-module, stability.}

\date{}

\begin{abstract}
Let $\A$ be a $C^*$-algebra and $\B$ be a von Neumann algebra that
both act on a Hilbert space $\Ha$. Let $\M$ and $\N$ be inner
product modules over $\A$ and $\B$, respectively. Under certain
assumptions we show that for each mapping $f\colon{\mathcal M} \to
{\mathcal N}$ satisfying
$$\|\,|\ip{f(x)}{f(y)}|-|\ip{x}{y}|\,\|\leq\varphi(x,y)\qquad (x,y\in{\mathcal M}),$$ where $\varphi$ is
a control function, there exists a solution $I\colon{\mathcal M} \to
{\mathcal N}$ of the Wigner equation
$$|\ip{I(x)}{I(y)}|=|\ip{x}{y}|\qquad (x, y \in {\mathcal M})$$
such that
$$\|f(x)-I(x)\|\leq\sqrt{\varphi(x,x)} \qquad (x\in {\mathcal M}).$$
\end{abstract}

\maketitle

\section{Introduction and preliminaries}

In this paper, we deal with a perturbation of the Wigner equation,
establishing a link between two topics: inner product $C^*$-modules
and stability of functional equations.


\subsection{Inner product $C^*$-modules}

A \emph{$C^*$-algebra} is a Banach $*$-algebra $(\A, \Vert \cdot
\Vert)$ such that $\Vert a^*a \Vert = \Vert a \Vert^2$ for every $a
\in \A$. Every $C^*$-algebra can be regarded as a $C^*$-subalgebra
of $\mathbb{B}\,(\Ha)$, the algebra of all bounded linear operators
on some Hilbert space $\Ha$. Recall that $a \in \A$ is called
\emph{positive} (we write $a \geq 0$) if $a=b^*b$ for some $b \in
\A$. If $a \in \A$ is positive, then there is a unique positive $b
\in \A$ such that $a=b^2$; such an element $b$ is called the
positive square root of $a$. For every $a \in \A$, the positive
square root of $a^*a$ is denoted by $\abs{a}$.

A \emph{von Neumann algebra} is a $C^*$-subalgebra of
$\mathbb{B}\,(\Ha)$ which contains the identity operator $id_{\Ha}$
and is closed in the weak operator topology (the topology generated
by the seminorms $a \mapsto |\langle a\xi, \eta\rangle|,$ where
$\xi, \eta \in \Ha$). If $\A$ is a von Neumann algebra, then the
following \emph{polar decomposition} holds: for each $a \in \A$
there exists a partial isometry $u \in \A$ (i.e., $u^*u$ is a
projection) such that $a=u\abs{a}$ and $u^*a=\abs{a}$ (see e.g.
\cite[Theorem 4.1.10]{mur} or \cite[Theorem I.8.1]{dav}).

Let $(\A,\|\cdot\|)$ be a $C^*$-algebra and let $\X$ be an algebraic
right $\A$-module which is a complex linear space with $(\lambda
x)a=x(\lambda a)=\lambda (xa)$ for all $x\in {\mathcal X}$, $a\in
\A$, $\lambda\in {\mathbb C}$. The space ${\mathcal X}$ is called a
\emph{(right) inner product} $\A$-\emph{module} (\emph{inner product
$C^*$-module over the $C^*$-algebra $\A$, pre-Hilbert $\A$-module})
if there exists an $\A$-valued inner product, i.e., a mapping
$\ip{\cdot}{\cdot} \colon {\mathcal X}\times {\mathcal X}\to \A$
satisfying
\begin{enumerate}
\item[{\rm (i)}] $\ip{x}{x}\geq 0$ and $\ip{x}{x} =0$ if and only
if $x=0$,

\item[{\rm (ii)}] $\ip{x}{\lambda y+z} = \lambda\ip{x}{y}+
\ip{x}{z}$,

\item[{\rm (iii)}] $\ip{x}{ya} =\ip{x}{y}a$,

\item[{\rm (iv)}] $\ip{y}{x}=\ip{x}{y}^*$,
\end{enumerate}
for all $x, y, z \in {\mathcal X}$, $a\in \A$, $\lambda\in {\mathbb
C}$. The conditions (ii) and (iv) yield the fact that the inner
product is conjugate-linear with respect to the first variable.
Elements $x,y \in \X$ are called orthogonal if and only if $\langle
x, y \rangle = 0$. In an inner product $\A$-module $\X$ the
following version of the \emph{Cauchy-Schwarz inequality} is true:
$$\Vert \langle x, y \rangle \Vert \leq \Vert x \Vert_{\X} \Vert y \Vert_{\X} \qquad (x,y \in \X),$$
where $\Vert x \Vert_{\X} = \sqrt{\Vert \langle x, x \rangle \Vert}$
for all $x \in \X$ (see e.g.~\cite[Proposition 1.2.4.(iii)]{M-T}).
It follows that $\Vert \cdot \Vert_{\X}$ is a norm on $\X$, so $(\X,
\Vert \cdot \Vert_{\X})$ is a normed space (we will denote the norm
in $\X$ simply by $\|\cdot\|$, omitting the subscript $\X$; it will
be clear from the context whether $\Vert \cdot \Vert$ denotes the
norm on $\A$ or the norm on $\mathcal{X}$). If this normed space is
complete, then $\X$ is called a \emph{Hilbert} $\A$-\emph{module},
or a \emph{Hilbert $C^*$-module over the $C^*$-algebra} $\A$. A left
inner product $\A$-module can be defined analogously. Any inner
product (resp.~Hilbert) space is an inner product (resp.~Hilbert)
$\mathbb{C}$-module and any $C^*$-algebra $\A$ is a Hilbert
$C^*$-module over itself via $\langle a, b \rangle =a^*b$, for all
$a,b \in \A$.

For an inner product $\A$-module ${\mathcal X}$, let ${\mathcal
X}^\#$ be the set of all bounded $\A$-linear mappings from
${\mathcal X}$ into $\A$, that is, the set of all bounded linear
mappings $f : \X \to \A$ such that $f(xa)=f(x)a$ for all $x \in \X$,
$a \in \A$. Every $x \in {\mathcal X}$ gives rise to a mapping
$\hat{x} \in {\mathcal X}^\#$ defined by $\hat{x}(y) =\ip{x}{y}$ for
all $y \in {\mathcal X}$. A Hilbert module ${\mathcal X}$ is called
\emph{self-dual} if ${\mathcal X}^\# = \{\hat{x}: x\in {\mathcal
X}\}$.

More information on inner product modules can be found e.g.~in
monographs \cite{LAN, M-T}.


\subsection{Stability of functional equations}
Defining, in some way, the class of approximate solutions of the
given functional equation, one can ask whether each mapping from
this class can be somehow approximated by an exact solution of the
considered equation. Such a problem was formulated by Ulam in 1940
(cf.\ \cite{ulam}) and solved in the next year for the Cauchy
functional equation by Hyers \cite{hyers}. It gave rise to the
\emph{stability theory} for functional equations. Subsequently,
various approaches to the problem have been introduced by several
authors. For the history and various aspects of this theory we refer
the reader to monographs \cite{hir, jung-book}. Recently, the stability problems have been investigated in Hilbert
$C^*$-modules as well; see \cite{amy, a-m}.


\subsection{Wigner equation}

We will be considering the \emph{Wigner equation}
$$
|\ip{I(x)}{I(y)}|=|\ip{x}{y}|\qquad (x, y \in {\mathcal M}),
\eqno{{\rm (W)}}
$$
where $I\colon {\mathcal M} \to {\mathcal N}$ is a mapping between
inner product  modules ${\mathcal M}$ and ${\mathcal N}$ over
certain $C^*$-algebras.

We say that two mappings $f,g\colon{\mathcal M}\to{\mathcal N}$
are \emph{phase-equivalent} if and only if there exists a scalar
valued mapping $\xi\colon{\mathcal M}\to{\mathbb C}$ such that
$|\xi(x)|=1$ and $f(x)=\xi(x)g(x)$ for all $x\in{\mathcal M}$. The
equation (W) has been already introduced in 1931 by E.P.\ Wigner
\cite{wigner} in the realm of (complex) Hilbert spaces. The
classical Wigner's theorem, stating that a solution of (W) has to
be phase-equivalent to a unitary or antiunitary operator, has deep
applications in physics, see \cite{pos, sch}. One of the proofs of
this remarkable result can be found e.g.\ in \cite{gyory} (for
further comments we refer also to \cite{raetz}). Recently,
Wigner's result has been studied in the realm of Hilbert modules
(cf.~e.g.~\cite{b-g1, b-g2, molnar}). The stability of the Wigner
equation has been extensively studied for Hilbert spaces only
(cf.\ a survey paper \cite{jch-survey} or \cite[Chapter 9]{hir}).

In the following section we consider the stability of the Wigner
equation in the setting of inner product  modules. Let us mention
that the stability of the related \emph{orthogonality equation}
$$
\ip{I(x)}{I(y)}=\ip{x}{y}
$$
in this framework has been recently established in \cite{ch-m}.


\section{Stability of the Wigner equation}

Suppose that we are given a {\it control mapping} $\varphi\colon
{\mathcal M}\times{\mathcal M}\to[0,\infty)$ satisfying, with some
constant $0<c\neq 1$, the following pointwise convergence and
boundedness:

\begin{eqnarray}\label{phi}
\begin{array}{ll}
\mbox{(a)} & \displaystyle{\lim_{n\to\infty}} c^{n}\varphi(c^{-n}x, y)=0\ \mbox{and\ }
\displaystyle{\lim_{n\to\infty}} c^{n}\varphi(x, c^{-n}y)=0\ \mbox{for any fixed\ } x,y \in {\mathcal M};\\
\\
\mbox{(b)} & \mbox{the sequence } \big(c^{2n}\varphi(c^{-n}x,
c^{-n}x)\big)\ \mbox{is bounded for any fixed\ } x \in
\mathcal{M}.
\end{array}
\end{eqnarray}

We say that a mapping $f\colon{\mathcal M} \to {\mathcal N}$ {\it
approximately} satisfies the Wigner equation if
$$
\|\,|\ip{f(x)}{f(y)}|-|\ip{x}{y}|\,\|\leq\varphi(x,y)\qquad
(x,y\in{\mathcal M}). \eqno{\mbox{(W$_{\varphi}$)}}
$$
The question we would like to answer is if each solution of
(W$_{\varphi}$) can be approximated by a solution of (W).

Let us consider the following condition on an inner product
$C^*$-module ${\mathcal X}$.


\begin{enumerate}
\item[{\sf [H]}] For each norm-bounded sequence $(x_n)$ in
${\mathcal X}$, there exists a subsequence $(x_{l_n})$ of $(x_n)$
and $x_0\in{\mathcal X}$ such that
\begin{equation*}\label{weaklim}
\|\ip{x_{l_n}}{y}- \ip{x_0}{y}\| \to 0\ (\mbox{as}\
n\to\infty)\qquad \mbox{for all}\ y\in {\mathcal X}.
\end{equation*}
\end{enumerate}

Validity of {\sf [H]} in Hilbert spaces follows from its
reflexivity and the fact that each ball is sequentially weakly
compact. It is an interesting question to characterize the class
of all inner product $C^*$-modules in which {\sf [H]} is
satisfied. In \cite{frank} the following similar condition is considered.

\begin{enumerate}
\item[{\sf [F]}] The unit ball of ${\mathcal X}$ is complete with
respect to the topology which is induced by the semi-norms
$x\mapsto \|\ip{x}{y}\|$ with $y\in {\mathcal X}$, $\|y\|\leq 1$.
\end{enumerate}

We have the following result.


\begin{proposition}\label{prop}
If ${\mathcal X}$ is a Hilbert $C^*$-module over a
finite-dimensional $C^*$-algebra $\A$, then the condition {\sf [H]}
is satisfied.
\end{proposition}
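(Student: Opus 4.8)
The plan is to exploit the structure theory of finite-dimensional $C^*$-algebras to turn $\X$ into a genuine Hilbert space, reduce {\sf [H]} to ordinary weak sequential compactness there, and then upgrade the resulting scalar convergence to norm convergence in $\A$ using that $\A$ is finite-dimensional. First I would fix a faithful positive trace $\tau$ on $\A$: since $\A$ is finite-dimensional it is $*$-isomorphic to a finite direct sum of matrix algebras $\bigoplus_i M_{n_i}(\mathbb{C})$, on which $\tau=\sum_i\tfrac{1}{n_i}\mathrm{Tr}_i$ is a faithful tracial positive functional. Using $\tau$, define a scalar-valued form on $\X$ by $(x,y):=\tau(\ip{x}{y})$. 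Properties (i)--(iv) together with faithfulness of $\tau$ and $\tau(a^*)=\overline{\tau(a)}$ show that $(\cdot,\cdot)$ is a genuine inner product, with associated norm $\|x\|_\tau:=\sqrt{\tau(\ip{x}{x})}$.

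The key preliminary step is to check that $\|\cdot\|_\tau$ and the module norm $\|\cdot\|$ are equivalent. This reduces to an algebra-level estimate: there are constants $0<c_1\le c_2$ with $c_1\|a\|\le\tau(a)\le c_2\|a\|$ for every positive $a\in\A$, both bounds being read off from the eigenvalues on each matrix summand. Applying this to $a=\ip{x}{x}$ gives $\sqrt{c_1}\,\|x\|\le\|x\|_\tau\le\sqrt{c_2}\,\|x\|$. Since $\X$ is complete in $\|\cdot\|$, it is complete in the equivalent norm $\|\cdot\|_\tau$, so $(\X,(\cdot,\cdot))$ is a Hilbert space.

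Now, given a $\|\cdot\|$-bounded sequence $(x_n)$, it is $\|\cdot\|_\tau$-bounded, so by reflexivity of Hilbert space there is a subsequence $(x_{l_n})$ and an element $x_0\in\X$ with $x_{l_n}\rightharpoonup x_0$ weakly, that is, $(x_{l_n},z)\to(x_0,z)$ for every $z\in\X$. To recover {\sf [H]}, fix $y\in\X$ and $b\in\A$ and test against $z=yb$; using (iii),
\[
\tau\big(\ip{x_{l_n}}{y}\,b\big)=(x_{l_n},yb)\longrightarrow(x_0,yb)=\tau\big(\ip{x_0}{y}\,b\big).
\]
Writing $a_n:=\ip{x_{l_n}}{y}-\ip{x_0}{y}\in\A$, this says $\tau(a_n b)\to 0$ for every $b\in\A$. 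Because the bilinear form $(a,b)\mapsto\tau(ab)$ is non-degenerate (faithfulness gives $\tau(b^*b)=0\Rightarrow b=0$), the functionals $a\mapsto\tau(ab)$, with $b\in\A$, exhaust $\A^*$; hence $a_n\to 0$ weakly in $\A$. As $\A$ is finite-dimensional, weak convergence coincides with norm convergence, so $\|\ip{x_{l_n}}{y}-\ip{x_0}{y}\|\to 0$, which is exactly {\sf [H]}.

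I expect the main obstacle to be this last step: ordinary weak sequential compactness in the Hilbert space only yields convergence of the scalars $\tau(\ip{x_{l_n}}{y})$, whereas {\sf [H]} demands convergence of the $\A$-valued quantities $\ip{x_{l_n}}{y}$ in the $C^*$-norm. The device of testing against $yb$ for all $b\in\A$, combined with the non-degeneracy of the trace pairing and the finite-dimensionality of $\A$, is what bridges this gap; the norm-equivalence step, though routine, is essential since it is what guarantees completeness of $(\X,(\cdot,\cdot))$ and hence the availability of weak compactness.
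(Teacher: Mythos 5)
Your proof is correct, but it follows a genuinely different route from the paper's. The paper argues entirely inside the module: it runs a diagonal extraction using Bolzano--Weierstrass in the finite-dimensional algebra $\A$ to make $\ip{x_{l_n}}{x_i}$ converge for each $i$, extends the convergence to the closed submodule $\X_0$ generated by the $x_i$, and then invokes two structural facts about Hilbert modules over finite-dimensional $C^*$-algebras --- self-duality and the orthogonal decomposition $\X=\X_0\oplus\X_0^{\perp}$ --- to realize the limit functional as $\ip{x_0}{\cdot}$ for some $x_0\in\X$. You instead linearize the whole problem: a faithful positive trace $\tau$ turns $\X$ into a genuine Hilbert space with inner product $\tau(\ip{x}{y})$, the norm equivalence (valid because $\tau$ is bounded above and below on the positive cone of the finite-dimensional $\A$) transfers completeness, weak sequential compactness produces the subsequence and the candidate $x_0$, and the test vectors $yb$ together with the non-degeneracy of the pairing $(a,b)\mapsto\tau(ab)$ and the coincidence of weak and norm convergence in finite dimensions upgrade the scalar convergence to convergence of $\ip{x_{l_n}}{y}$ in the $C^*$-norm. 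All of these steps check out (in particular the surjectivity of $b\mapsto\tau(\cdot\,b)$ onto $\A^*$ follows from injectivity plus a dimension count, and $\tau(aa^*)=0\Rightarrow a=0$ gives injectivity). What your approach buys is self-containedness: it avoids the self-duality theorem for Hilbert modules over finite-dimensional $C^*$-algebras and the complementability of $\X_0$, which are the least elementary ingredients of the paper's argument, at the cost of the (routine but necessary) norm-equivalence lemma. Both arguments use the completeness of $\X$ and the finite-dimensionality of $\A$ in an essential way, so neither generalizes more readily than the other.
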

\begin{proof}
Let $(x_n)$ be a bounded sequence in ${\mathcal X}$ ($\|x_n\|\leq
M$, $n=1,2,\ldots$). Then the $\A$-valued sequence $(\ip{x_n}{x_1})$
is bounded ($\|\ip{x_n}{x_1}\|\leq M\,\|x_1\|$). Since $\A$ is
finite-dimensional, the theorem of Bolzano-Weierstrass holds true,
so there exists a subsequence $(\ip{x_n^1}{x_1})$ of the sequence
$(\ip{x_n}{x_1})$ convergent in $\A$. Next, we may choose, by the
same reason, a convergent subsequence $(\ip{x_n^2}{x_2})$ of the
bounded sequence $(\ip{x_n^1}{x_2})$, and so on. Define
$x_{l_n}:=x_{n}^{n}$. Obviously, $\left(x_{l_n}\right)$ is a
subsequence of $(x_n)$. It is also clear from the construction of
$\left(x_{l_n}\right)$ that $\ip{x_{l_n}}{x_i}=\ip{x_{n}^{n}}{x_i}$
is  convergent in $\A$ for $i=1,2,\ldots$ (when $n\to\infty$).
Therefore also $\ip{x_{l_n}}{z}$ is convergent for all $z$ in
${\mathcal X}_{0}$
--- the closed submodule of ${\mathcal X}$ generated by the sequence
$\{x_1,x_2,\ldots\}$. Since $\A$ is finite-dimensional, the Hilbert
$\A$-module ${\mathcal X_0}$ is self-dual (cf.\ \cite[p.~27]{M-T})
and we have  ${\mathcal X}={\mathcal X}_{0}\oplus {\mathcal
X}_{0}^{\bot}$ (cf. \cite[Proposition 2.5.4]{M-T}). For any
$y\in{\mathcal X}$, $y=z+z'$ with some $z\in{\mathcal X}_{0}$ and
$z'\in  {\mathcal X}_{0}^{\bot}$. Thus
$\ip{x_{l_n}}{y}=\ip{x_{l_n}}{z}+\ip{x_{l_n}}{z'}=\ip{x_{l_n}}{z}$,
whence $\ip{x_{l_n}}{y}$ is convergent to some $\varphi(y)$ in $\A$
for any $y\in {\mathcal X}$. The mapping $\varphi$ is $\A$-linear.
Moreover, we have $\Vert \langle x_{l_n}, y \rangle \Vert \leq \Vert
x_{l_n} \Vert \Vert y \Vert \leq M \Vert y \Vert$, whence $\Vert
\varphi(y) \Vert \leq M \Vert y \Vert$ and thus $\varphi\in{\mathcal
X}^{\#}$. From the self-duality of ${\mathcal X}$ there exists
$x_0\in {\mathcal X}$ such that $\varphi(y)=\ip{x_0}{y}$ for all
$y\in{\mathcal X}$. From the definition of $\varphi$ this means
$\ip{x_{l_n}}{y}\to\ip{x_0}{y}$ for all $y\in {\mathcal X}$.
\end{proof}


\begin{theorem}\label{t1}
Let $\A$ be a $C^*$-algebra and $\B$ be a von Neumann algebra that both act on a Hilbert space $\Ha$. Let ${\M}$ be an inner product
$\A$-module and let $\N$ be an inner product $\B$-module satisfying
{\sf [H]}. Then, for each mapping $f\colon{\mathcal M} \to {\mathcal
N}$ satisfying {\rm (W$_{\varphi}$)}, with $\varphi$ satisfying
(\ref{phi}), there exists $I \colon{\mathcal M} \to {\mathcal N}$
with the following properties:
\begin{itemize}
\item[(i)] $\langle I(x), I(x) \rangle = \langle x, x \rangle
\qquad (x \in {\mathcal M})$, \item[(ii)] $I$ preserves
orthogonality in both directions, that is, $\langle x, y \rangle =
0$ if and only if $\langle I(x), I(y) \rangle = 0$,
\item[(iii)]$\|f(x)-I(x)\|\leq\sqrt{\varphi(x,x)} \qquad (x \in
{\mathcal M})$.
\end{itemize}
Furthermore, there exists $h \colon \M \to \N$ such that the
following decomposition holds:
$$f(x)=h(x)+I(x), \quad \langle h(x), I(x) \rangle = 0 \quad \textup{and} \quad
\Vert h(x) \Vert \leq\sqrt{\varphi(x,x)} \qquad (x\in {\mathcal
M}).$$ If $\B$ is abelian, then $I$ can be chosen as a solution of
{\rm (W)}.
\end{theorem}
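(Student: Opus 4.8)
The plan is to run a Hyers-type rescaling that absorbs the control function $\varphi$, to pass to a limit using the weak sequential compactness {\sf [H]}, and finally to fix the phase by a polar decomposition in $\B$. First I would record the diagonal form of (W$_{\varphi}$): putting $y=x$ and noting that $\ip{f(x)}{f(x)}$ and $\ip{x}{x}$ are positive, so that $\abs{\cdot}$ leaves them unchanged, yields $\|\ip{f(x)}{f(x)}-\ip{x}{x}\|\leq\varphi(x,x)$. For fixed $x$ I would then study the rescaled sequence $g_n(x):=c^{n}f(c^{-n}x)$. Replacing $x$ by $c^{-n}x$ in (W$_{\varphi}$), multiplying by $c^{n}$, and using $\ip{c^{-n}x}{y}=c^{-n}\ip{x}{y}$ together with $\abs{c^{-n}a}=c^{-n}\abs{a}$ gives
$$\big\|\,\abs{\ip{g_n(x)}{f(y)}}-\abs{\ip{x}{y}}\,\big\|\leq c^{n}\varphi(c^{-n}x,y),$$
so hypothesis (a) forces $\abs{\ip{g_n(x)}{f(y)}}\to\abs{\ip{x}{y}}$, while the rescaled diagonal estimate together with hypothesis (b) shows that $(g_n(x))_n$ is norm-bounded.

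Next I would apply {\sf [H]} to $(g_n(x))_n$ to obtain a subsequence and an element $I(x)\in\N$ with $\ip{g_{l_n}(x)}{z}\to\ip{I(x)}{z}$ for all $z\in\N$. Choosing $z=f(y)$ and using that $b\mapsto\abs{b}=(b^*b)^{1/2}$ is norm-continuous, the two limits combine to the exact relation
$$\abs{\ip{I(x)}{f(y)}}=\abs{\ip{x}{y}}\qquad(x,y\in\M).$$
Specializing the second argument to $c^{-n}x$ turns this into $\abs{\ip{I(x)}{g_n(x)}}=\ip{x}{x}$ for every $n$; letting $g_{m_n}(x)\to I(x)$ weakly and invoking continuity of $\abs{\cdot}$ and positivity of the limit upgrades it to $\ip{I(x)}{I(x)}=\ip{x}{x}$, which is (i). Running the same argument in a second variable $y$ yields the Wigner-type identity $\abs{\ip{I(x)}{I(y)}}=\abs{\ip{x}{y}}$, and (ii) is then immediate because $\ip{u}{v}=0$ if and only if $\abs{\ip{u}{v}}=0$.

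For the orthogonal decomposition and the estimate (iii) I would turn to the polar decomposition in the von Neumann algebra $\B$. Writing $b:=\ip{I(x)}{f(x)}$, the relation above gives $\abs{b}=\ip{x}{x}$, and the decomposition $b=u\abs{b}$ supplies a partial isometry $u$ with which I would rotate $I(x)$ so that the cross term $\ip{f(x)}{I(x)}$ becomes the positive element $\ip{x}{x}$. Setting $h(x):=f(x)-I(x)$ then gives $\ip{h(x)}{I(x)}=0$ and $\ip{h(x)}{h(x)}=\ip{f(x)}{f(x)}-\ip{x}{x}$, whence $\|h(x)\|\leq\sqrt{\varphi(x,x)}$ by the diagonal estimate, establishing the decomposition and (iii). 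The hard part, I expect, is precisely this phase correction: the only action available in the module is right multiplication by $\B$, under which $\ip{I(x)u}{I(x)u}=u^{*}\ip{x}{x}u$, so the correction that aligns the cross term can a priori destroy the norm identity (i). Carrying out the correction while keeping $\ip{I(x)}{I(x)}=\ip{x}{x}$ intact is where non-commutativity genuinely interferes, and where the support projection of $\ip{x}{x}$ must be handled with care. When $\B$ is abelian every element commutes with $\ip{x}{x}$, the correction is harmless, and the Wigner-type identity survives it, so that the full equation (W) is recovered exactly in the commutative case.
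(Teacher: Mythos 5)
Your architecture is the same as the paper's: rescale to $f_n(x)=c^{n}f(c^{-n}x)$, use the boundedness from the diagonal estimate together with {\sf [H]} to extract a limit map $F$ satisfying $\abs{\ip{F(x)}{F(y)}}=\abs{\ip{x}{y}}$, and then correct the phase of $F(x)$ by the partial isometry from the polar decomposition of the cross term $\ip{F(x)}{f(x)}$ so that $\ip{f(x)}{I(x)}=\ip{x}{x}$ and the telescoping identity $\ip{f(x)-I(x)}{f(x)-I(x)}=\ip{f(x)}{f(x)}-\ip{x}{x}$ gives (iii). However, you stop exactly at the point you yourself flag as ``the hard part'': you note that right multiplication by the partial isometry $u$ turns $\ip{F(x)}{F(x)}=\ip{x}{x}$ into $u^{*}\ip{x}{x}u$ and may destroy (i), and you give no argument that it does not, except in the abelian case. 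Since the theorem asserts (i)--(iii) for an arbitrary von Neumann algebra $\B$, this is a genuine gap rather than a technicality: without it neither (i) nor the orthogonality of $h(x)$ and $I(x)$ nor the bound in (iii) is established for the corrected map.

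The missing idea is the \emph{normality} of the cross term. One must prove \emph{both} limits $\|\,\abs{\ip{f_n(x)}{f(y)}}^2-\abs{\ip{x}{y}}^2\|\to 0$ and $\|\,\abs{\ip{f(x)}{f_n(y)}}^2-\abs{\ip{x}{y}}^2\|\to 0$ (your proposal records only the first, one-sided relation $\abs{\ip{F(x)}{f(y)}}=\abs{\ip{x}{y}}$). Together they give $\abs{a}^2=\ip{x}{x}^2=\abs{a^*}^2$ for $a:=\ip{F(x)}{f(x)}$, i.e.\ $a^*a=aa^*$, so $a$ is normal. Hence the von Neumann algebra $\B(x)$ generated by $a$, $a^*$ and $id_{\Ha}$ is abelian, and the partial isometry $s(x)$ of the polar decomposition of $a$ lies in $\B(x)$ and therefore commutes with $\abs{a}=\ip{x}{x}$. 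This is what saves (i): $\ip{F(x)s(x)}{F(x)s(x)}=s(x)^*\ip{x}{x}s(x)=s(x)^*s(x)\ip{x}{x}=\ip{x}{x}$, the last equality because $s(x)^*s(x)$ is the support projection of $\abs{a}$. The same commutation yields $F(x)=F(x)s(x)s(x)^*=I(x)s(x)^*$, which you need to transfer the ``only if'' direction of (ii) from $F$ to $I$, and, when $\B$ is abelian, to carry the full Wigner identity through the correction. Without the normality argument the correction is exactly as problematic as you fear, and the proof does not close.
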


\begin{proof} For $n \in \mathbb{N} \cup \{0\}$ let $f_n(x):=c^{n}f(c^{-n}x)$,  $x\in
{\mathcal M}$. Substituting in (W$_{\varphi}$), $c^{-n}x$ and
$c^{-m}y$ (with $m,n\in\mathbb{N}\cup\{0\}$) for $x$ and $y$,
respectively, one obtains
\begin{equation}\label{e1}
\|\,|\ip{f_n(x)}{f_m(y)}|-|\ip{x}{y}|\,\|\leq
c^{m+n}\varphi(c^{-n}x,c^{-m}y)\qquad (x, y \in {\mathcal M}).
\end{equation}
By (\ref{phi}-a),
$$\|\,|\ip{f_n(x)}{f(y)}|-|\ip{x}{y}|\,\|\to 0\ (\mbox{as}\ n\to\infty) \qquad (x,y \in \M),$$
which, by the continuity of multiplication, implies
\begin{equation}\label{zero}
\|\,|\ip{f_n(x)}{f(y)}|^2-|\ip{x}{y}|^2\,\|\to 0\ (\mbox{as}\
n\to\infty) \qquad (x,y \in \M).
\end{equation}
Analogously,
$$\|\,|\ip{f(x)}{f_n(y)}|-|\ip{x}{y}|\,\|\to 0\ (\mbox{as}\ n\to\infty) \qquad (x,y \in \M)$$
implies
\begin{equation}\label{zero1}
\|\,|\ip{f(x)}{f_n(y)}|^2-|\ip{x}{y}|^2\,\|\to 0\ (\mbox{as}\
n\to\infty) \qquad (x,y \in \M).
\end{equation}
For $x=y$ and $n=m$, (\ref{e1}) yields
\begin{equation}\label{e2}
\|\ip{f_n(x)}{f_n(x)}-\ip{x}{x}\|\leq c^{2n}\varphi(c^{-n}x,c^{-n}x)
\qquad (x \in {\mathcal M}).
\end{equation}
Then we have
$$
\Vert \ip{f_n(x)}{f_n(x)}\Vert-\Vert\ip{x}{x}\Vert
\leq\|\ip{f_n(x)}{f_n(x)}-\ip{x}{x}\|\leq
c^{2n}\varphi(c^{-n}x,c^{-n}x)
$$
for all $x\in {\mathcal M}$, whence
\begin{equation*}
\|f_n(x)\|^2\leq\|x\|^2+ c^{2n}\varphi(c^{-n}x,c^{-n}x) \qquad (x\in
{\mathcal M}).
\end{equation*}
Let us fix $x \in \M$. By (\ref{phi}-b), the sequence $(f_n(x))$
in $\N$ is norm-bounded and therefore, due to {\sf [H]}, there
exists a subsequence of $(f_n(x))$ (for simplicity we shall assume
that $(f_n(x))$ has such a property) and $F(x) \in \N$ such that
\begin{equation*}
\|\ip{f_n(x)}{v}- \ip{F(x)}{v}\|\to 0\ (\mbox{as}\ n\to\infty)\qquad
(v\in {\mathcal N}).
\end{equation*}
By the continuity of multiplication and the continuity of involution $\ast$, this yields
\begin{equation}\label{first}
\|\, \abs{\ip{f_n(x)}{v}}^2- \abs{\ip{F(x)}{v}}^2\, \|\to 0\
(\mbox{as}\ n\to\infty)\qquad (v\in {\mathcal N}),
\end{equation}
as well as
\begin{equation}\label{second}
\|\, \abs{\ip{v}{f_n(x)}}^2- \abs{\ip{v}{F(x)}}^2\,\|\to 0\
(\mbox{as}\ n\to\infty)\qquad (v\in {\mathcal N}).
\end{equation}
Thus we have defined the mapping $F : \M \to \N$ such that
(\ref{first}) and (\ref{second}) are true for each $x \in \M$. In
particular, (\ref{first}) implies
$$\|\abs{\ip{f_n(x)}{f(y)}}^2-
\abs{\ip{F(x)}{f(y)}}^2\|\to 0\ (\mbox{as}\ n\to\infty) \qquad (x,y
\in \M).$$ Hence, because of (\ref{zero}),
\begin{equation}\label{Ff}
\abs{\ip{F(x)}{f(y)}}^2=\abs{\ip{x}{y}}^2 \qquad (x,y \in \M).
\end{equation}
Inserting $c^{-n}y$ instead of $y$, we obtain
\begin{equation}\label{fn}
\abs{\ip{F(x)}{f_n(y)}}^2=\abs{\ip{x}{y}}^2 \qquad (x,y \in \M).
\end{equation}
Letting $n \to \infty$ this yields
$$\abs{\ip{F(x)}{F(y)}}^2=\abs{\ip{x}{y}}^2 \qquad (x,y \in \M),$$
and finally
\begin{equation}\label{FF}
\abs{\ip{F(x)}{F(y)}}=\abs{\ip{x}{y}} \qquad (x,y \in \M).
\end{equation}
In particular,
$$\ip{F(x)}{F(x)}=\ip{x}{x} \qquad (x \in \M).$$
Note that (\ref{Ff}) implies
\begin{equation}\label{Ffx}
\abs{\ip{F(x)}{f(x)}}^2=\langle x, x \rangle^2 \qquad (x \in \M).
\end{equation}%
From (\ref{second}) we get
$$\|\,\abs{\ip{f(x)}{f_n(x)}}^2-
\abs{\ip{f(x)}{F(x)}}^2\,\|\to 0\ (\mbox{as}\ n\to\infty) \qquad (x
\in \M),$$ which implies, because of (\ref{zero1}),
\begin{equation}\label{fFx}
\abs{\ip{f(x)}{F(x)}}^2=\langle x, x \rangle^2 \qquad (x \in \M).
\end{equation}
Comparing (\ref{Ffx}) and (\ref{fFx}) we conclude
$$\abs{\ip{F(x)}{f(x)}}^2 = \abs{\ip{f(x)}{F(x)}}^2 \qquad (x \in \M).$$
Hence, $\ip{F(x)}{f(x)}$ is a normal element in $\B$ for every $x
\in \M.$ Let us fix an arbitrary $x \in \M.$ Let $\B(x)$ be the von
Neumann algebra generated by the set $\{ \langle F(x), f(x) \rangle,
\langle f(x), F(x) \rangle, id_{\Ha}\}.$ Then $\B(x)$ is abelian
(cf.\ e.g.~\cite[p.~117]{mur}) and $\B(x) \subseteq \B.$

Using the polar decomposition we can find a partial isometry $s(x)
\in \B(x)$ such that
$$s(x)\abs{\ip{F(x)}{f(x)}} = \ip{F(x)}{f(x)} \quad \textup{ and } \quad s(x)^*\langle F(x), f(x) \rangle = \abs{\langle F(x), f(x) \rangle}.$$
Since $\abs{\langle F(x), f(x) \rangle}=\langle x, x \rangle,$ this
can be written as
$$s(x)\ip{x}{x} = \ip{F(x)}{f(x)} \quad \textup{ and } \quad s(x)^*\langle F(x), f(x) \rangle = \langle x, x \rangle.$$
In particular,
$$s(x)^*s(x)\langle x, x \rangle=s(x)^*\langle F(x), f(x) \rangle = \langle x, x \rangle.$$
Since $\B(x)$ is abelian and $\langle x, x \rangle =
\abs{\ip{F(x)}{f(x)}} \in \B(x),$ we conclude that all elements in
$\B(x)$ commute with $\langle x, x \rangle.$ If we define
$p(x)=s(x)^*s(x),$ then $p(x)$ is a projection in $\B(x)$ such that
$$p(x)\langle x, x \rangle=\langle x, x \rangle p(x) = \langle x, x \rangle.$$
Since $\langle F(x), F(x) \rangle = \langle x, x \rangle,$ this
implies
\begin{eqnarray}
\langle F(x)p(x)-F(x), F(x)p(x)-F(x) \rangle &=& p(x)\langle F(x), F(x) \rangle p(x)\nonumber \\
&& \mbox{}- p(x)\langle F(x), F(x) \rangle  \nonumber \\
&& \mbox{}- \langle F(x), F(x) \rangle p(x)\nonumber\\
&&\mbox{}+ \langle F(x), F(x) \rangle \nonumber \\
&=& p(x)\langle x, x \rangle p(x) - p(x)\langle x, x \rangle  \nonumber \\
&& \mbox{}- \langle x, x \rangle p(x)+ \langle x, x \rangle = 0
\nonumber.
\end{eqnarray}
Thus
\begin{equation}\label{Fp}
F(x)p(x)=F(x) \qquad (x \in \M).
\end{equation}
Let us define
$$I(x)=F(x)s(x) \in \N.$$
Then
$$\langle I(x), f(x) \rangle = s(x)^* \langle F(x), f(x) \rangle = \langle x, x \rangle,$$
whence by taking the adjoint,
\begin{equation}\label{fit}
\langle f(x), I(x) \rangle = \langle x, x \rangle.
\end{equation}

We have defined a mapping $I\colon{\mathcal M}\to{\mathcal N}$. We
will show that it satisfies the desired properties.

First we have, for all $x \in \M$,
\begin{eqnarray}\label{fitt}
\ip{I(x)}{I(x)}&=&s(x)^*\ip{F(x)}{F(x)}s(x) \nonumber \\
&=&s(x)^*\ip{x}{x}s(x)= s(x)^*s(x)\ip{x}{x}\\
&=&p(x)\ip{x}{x}=\ip{x}{x}.\nonumber
\end{eqnarray}
This implies
\begin{eqnarray*}
\langle f(x)-I(x), f(x)-I(x) \rangle &=& \langle f(x), f(x) \rangle - \langle f(x), I(x) \rangle\\
&& \mbox{}- \langle I(x), f(x) \rangle + \langle I(x), I(x) \rangle \\
&=& \langle f(x), f(x) \rangle - \langle x, x \rangle - \langle x, x \rangle + \langle x, x \rangle \\
&=& \langle f(x), f(x) \rangle - \langle x, x \rangle,
\end{eqnarray*}
which yields
$$\Vert f(x) - I(x) \Vert^2 = \Vert \langle f(x), f(x) \rangle - \langle x, x \rangle \Vert \leq \varphi(x,x) \qquad (x \in \M)$$
and finally
$$\Vert f(x) - I(x) \Vert \leq \sqrt{\varphi(x,x)} \qquad (x \in \M).$$

If $\langle x, y \rangle =0$, then (\ref{FF}) implies $\langle F(x),
F(y) \rangle =0$. This yields $\langle I(x), I(y) \rangle = s(x)^*
\langle F(x), F(y) \rangle s(y) =0$.

Since $s(x) \in \B(x)$ and $\B(x)$ is an abelian von Neumann
algebra, $s(x)s(x)^*=s(x)^*s(x)=p(x)$, so
$$
F(x)=F(x)p(x)=F(x)s(x)s(x)^*=I(x)s(x)^*. $$
If $\langle I(x),
I(y)\rangle=0$, then $\langle F(x), F(y)\rangle =s(x)\langle
I(x),I(y)\rangle s(y)^*=0$ and (\ref{FF}) implies $\langle
x,y\rangle=0$. Hence, $I$ preserves orthogonality in both
directions.

If we define $h(x):=f(x)-I(x)$ for all $x \in \M$, then, by
(\ref{fit}) and (\ref{fitt}),
$$\langle h(x), I(x) \rangle = \langle f(x), I(x) \rangle - \langle I(x), I(x) \rangle
= \langle x, x \rangle - \langle x, x \rangle = 0.$$

Now assume that $\B$ is abelian. According to (\ref{FF}) we have
\begin{eqnarray}
\abs{\langle I(x), I(y) \rangle}^2 &=& s(y)^* \langle F(y), F(x) \rangle s(x)s(x)^* \langle F(x), F(y) \rangle s(y) \nonumber \\
&=& p(y) \langle F(y), F(x) \rangle p(x) \langle F(x), F(y) \rangle \nonumber \\
&=& \langle F(y)p(y), F(x)p(x) \rangle \langle F(x), F(y) \rangle \nonumber \\
&=& \abs{\langle F(x), F(y) \rangle}^2 = \abs{\langle x, y
\rangle}^2 \nonumber \quad (x,y \in \M),
\end{eqnarray}
so $I$ is a solution of {\rm (W)}.
\end{proof}
\begin{remark}
The conclusion (i) of Theorem \ref{t1} implies $\langle x, x
\rangle = \langle I(x), I(x) \rangle \in \B$. Since $\langle x, x
\rangle \in \B$ for all $x \in \mathcal M$, we have $\langle x,y\rangle =
1/4\sum_{i=0}^3 i^k\langle x+i^ky, x+i^ky\rangle \in \B$ for all $x,y \in \mathcal M$. If
$\mathcal M$ is a full Hilbert $\A$-module (that is, the ideal
$\langle \mathcal M, \mathcal M \rangle$ generated by all products
$\ip{x}{y}$, $x,y\in\M$, is dense in $\A$), this yields $\A
\subseteq \B.$
\end{remark}


\section{Applications and notes}

The \textit{orthogonality preserving property} of the mapping $I$
in Theorem \ref{t1} leads to a question on how this property is
related to the Wigner equation.  Such a relation was shown in the
realm of Hilbert spaces and under some linearity-type assumptions
(cf.~e.g.~\cite[Corollary 2.4]{jch2006}). Linear orthogonality
preserving mappings have also been studied in normed spaces, with
the Birkhoff-James or  the semi-inner product orthogonalities
(cf.~\cite{kr, koldobsky, b-t, chm-bjma}).
 It seems that the
investigation on such a class of mappings between inner product
modules would be of independent interest; some recent results in
that direction can be found in \cite{i-t}.

The following result is a consequence of Theorem \ref{t1} and
Proposition \ref{prop}.
\begin{corollary}
Let $\A$ and $\B$ be finite-dimensional $C^*$-algebras acting on a
Hil\-bert space $\Ha$ and let $\B$ be abelian and containing the
identity operator $id_{\Ha}.$ Let ${\M}$ be an inner product
$\A$-module and let $\N$ be a Hilbert $\B$-module. Then, for each
mapping $f\colon{\mathcal M} \to {\mathcal N}$ satisfying {\rm
(W$_{\varphi}$)}, with $\varphi$ satisfying (\ref{phi}), there
exists a solution $I\colon{\mathcal M} \to {\mathcal N}$ of {\rm
(W)} such that
$$
\|f(x)-I(x)\|\leq\sqrt{\varphi(x,x)} \qquad (x\in {\mathcal M}).
$$
\end{corollary}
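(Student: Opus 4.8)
The plan is to recognize that the hypotheses of this corollary are simply a specialization of those of Theorem \ref{t1}, so that the conclusion follows by citation once two structural facts are verified. Recall that Theorem \ref{t1} asks only that $\A$ be a $C^*$-algebra, that $\B$ be a von Neumann algebra, and that the target module $\N$ satisfy condition {\sf [H]}. In our setting $\A$ is a finite-dimensional $C^*$-algebra acting on $\Ha$, hence in particular a $C^*$-algebra, so no further work is needed there. Thus I would reduce the whole argument to checking that (a) $\B$ is a von Neumann algebra and (b) $\N$ satisfies {\sf [H]}, after which property (iii) of Theorem \ref{t1} yields the estimate $\|f(x)-I(x)\|\leq\sqrt{\varphi(x,x)}$ and the final clause of that theorem — which applies exactly when $\B$ is abelian — promotes $I$ to a genuine solution of {\rm (W)}.

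First I would verify that $\B$ is a von Neumann algebra. By hypothesis $\B$ is a $C^*$-subalgebra of $\mathbb{B}\,(\Ha)$ containing $id_{\Ha}$, so the only missing requirement is closedness in the weak operator topology. This is where finite-dimensionality is decisive: a finite-dimensional linear subspace of a Hausdorff topological vector space is automatically closed, and the weak operator topology on $\mathbb{B}\,(\Ha)$ is Hausdorff. Since $\B$ is finite-dimensional, it is therefore weakly operator closed, hence a von Neumann algebra.

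Next I would confirm condition {\sf [H]} for $\N$. Because $\N$ is a Hilbert $\B$-module over the finite-dimensional $C^*$-algebra $\B$, Proposition \ref{prop} applies verbatim with $\X=\N$, giving that every norm-bounded sequence in $\N$ admits a subsequence converging in the sense demanded by {\sf [H]}. With both points in hand I would invoke Theorem \ref{t1} directly: it produces $I\colon\M\to\N$ satisfying (i)--(iii), and since $\B$ is abelian the closing assertion of the theorem makes $I$ a solution of {\rm (W)}, while (iii) is precisely the claimed approximation bound.

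The only genuinely delicate point is the first step, namely recognizing that finite-dimensionality forces $\B$ to be weak-operator closed; everything else is a direct application of the two earlier results and requires no computation. I do not anticipate any real obstacle, since the corollary is essentially a repackaging of Theorem \ref{t1} under hypotheses tailored so that Proposition \ref{prop} supplies {\sf [H]} and abelianness of $\B$ supplies (W).
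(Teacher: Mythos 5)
Your proposal is correct and matches the paper's intent exactly: the paper offers no written proof, stating only that the corollary "is a consequence of Theorem \ref{t1} and Proposition \ref{prop}," and your argument supplies precisely the two verifications needed (finite-dimensionality forces $\B$ to be weak-operator closed, hence a von Neumann algebra, and Proposition \ref{prop} gives {\sf [H]} for $\N$) before citing the theorem.
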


In particular, for $\A=\B=\mathbb{C}$ we obtain the stability of the
Wigner equation between an inner product space $\M$ and a Hilbert
space $\N$.

One can consider the control mapping
$$\varphi(x,y):=\varepsilon
\|x\|^p\|y\|^q,
$$
where $\varepsilon>0$ and $p, q$ are fixed real numbers such that
either $p,q>1$ or $p, q< 1$ (we assume $\|0\|^0=1$ and
$\|0\|^p=\infty$ for $p < 0$). It is easy to check that the
conditions (\ref{phi}) are satisfied with $c=2$ (if $p,q> 1$) or
$c=\frac{1}{2}$ (if $p, q< 1$). Then the results from the previous
section yield the following result.


\begin{corollary}
Let $\A$ be a $C^*$-algebra and $\B$ be a von Neumann algebra that
both act on a Hilbert space $\Ha$. Let ${\M}$ be an inner product
$\A$-module and let $\N$ be an inner product $\B$-module
satisfying {\sf [H]}. Let either $p, q> 1$ or $p, q< 1$ and
$\varepsilon
>0$. Then, for each mapping $f\colon{\mathcal M} \to {\mathcal N}$
satisfying
$$
\|\,|\ip{f(x)}{f(y)}|-|\ip{x}{y}|\,\|\leq\varepsilon \|x\|^p\|y\|^q
\qquad (x,y\in {\mathcal M}),
$$
there exists $I \colon{\mathcal M} \to {\mathcal N}$ with the
following properties:
\begin{itemize}
\item[(i)] $\langle I(x), I(x) \rangle = \langle x, x \rangle
\qquad (x \in \M)$, \item[(ii)] $I$ preserves orthogonality in both
directions,
\item[(iii)] $\|f(x)-I(x)\|\leq\sqrt{\varepsilon}\,\|x\|^{(p+q)/2}
\qquad (x\in {\mathcal M})$.
\end{itemize}
Moreover, if $\B$ is abelian, then $I$ can be chosen as a solution
of {\rm (W)}.
\end{corollary}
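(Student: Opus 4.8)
The plan is to obtain this corollary as a direct specialization of Theorem \ref{t1} to the concrete control mapping $\varphi(x,y):=\varepsilon\|x\|^p\|y\|^q$. All three conclusions (i)--(iii) and the abelian refinement are already packaged in Theorem \ref{t1}, so the entire content of the proof is to check that this particular $\varphi$ satisfies the standing hypothesis (\ref{phi}); after that one only has to simplify the bound $\sqrt{\varphi(x,x)}$.

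First I would select the dilation constant according to the regime: $c=2$ when $p,q>1$ and $c=\tfrac{1}{2}$ when $p,q<1$. For (\ref{phi}-a) a one-line computation gives
$$c^{n}\varphi(c^{-n}x,y)=\varepsilon\,c^{\,n(1-p)}\|x\|^p\|y\|^q,$$
and the only thing to verify is that the factor $c^{\,n(1-p)}$ tends to $0$: when $c=2$ this holds because $p>1$ forces $1-p<0$, and when $c=\tfrac{1}{2}$ it holds because $p<1$ forces $1-p>0$. The second limit in (\ref{phi}-a) is identical, with $q$ in place of $p$. For (\ref{phi}-b) I would compute
$$c^{2n}\varphi(c^{-n}x,c^{-n}x)=\varepsilon\,c^{\,n(2-p-q)}\|x\|^{p+q},$$
which is in fact a null sequence (hence bounded) in both regimes, since $p,q>1$ gives $2-p-q<0$ against $c=2$, while $p,q<1$ gives $2-p-q>0$ against $c=\tfrac{1}{2}$.

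With (\ref{phi}) in hand I would invoke Theorem \ref{t1} to produce $I\colon\M\to\N$ satisfying (i) and (ii) and obeying $\|f(x)-I(x)\|\leq\sqrt{\varphi(x,x)}$. Substituting $\varphi(x,x)=\varepsilon\|x\|^{p+q}$ turns this into $\|f(x)-I(x)\|\leq\sqrt{\varepsilon}\,\|x\|^{(p+q)/2}$, which is precisely (iii); and the final clause, that $I$ solves (W) once $\B$ is abelian, is the concluding assertion of Theorem \ref{t1} and needs nothing further.

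The calculations above are entirely routine, so the only place I would tread carefully is the degenerate behaviour at the zero vector, which is exactly why the conventions $\|0\|^0=1$ and $\|0\|^p=\infty$ for $p<0$ are recorded. When an exponent is negative, $\varphi$ takes the value $+\infty$ at arguments having a zero entry, so (\ref{phi}) cannot be read literally there; I would dispose of such arguments by hand, setting $I(0)=0$ and checking (i)--(iii) directly. This is harmless because $\ip{0}{y}=0$ for every $y$, so (i) and the orthogonality statement (ii) become trivial, while (iii) at $x=0$ reduces, via (W$_{\varphi}$) applied to the pair $(0,0)$, to $\|f(0)\|^2\leq\varphi(0,0)$, which is either vacuous (when $\varphi(0,0)=+\infty$) or exactly the required estimate. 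I expect this bookkeeping to be the main, though genuinely minor, obstacle.
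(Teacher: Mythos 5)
Your proposal is correct and follows essentially the same route as the paper, which simply notes that $\varphi(x,y)=\varepsilon\|x\|^p\|y\|^q$ satisfies conditions (\ref{phi}) with $c=2$ (for $p,q>1$) or $c=\tfrac12$ (for $p,q<1$) and then invokes Theorem \ref{t1}. Your explicit verification of (\ref{phi}-a) and (\ref{phi}-b), and your extra care with the degenerate case $x=0$ under the conventions $\|0\|^0=1$ and $\|0\|^p=\infty$ for $p<0$, only make explicit what the paper leaves as ``easy to check.''
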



\begin{remark}
The case when the control mapping is of the form  $\varphi(x,y) =
\varepsilon  \|x\|\,\|y\|$ is still unsolved even in the framework
of Hilbert spaces (cf.~\cite{jch}).
\end{remark}


As a consequence of Theorem \ref{t1}, we have obtained the already
known stability results in the category of Hilbert spaces.
However, in Hilbert spaces it has been proved
(cf.~\cite{jch-survey}) that the mapping $I$ appearing in the
assertion of Theorem \ref{t1} is unique up to phase-equivalence.
The proof used, in particular, the Wigner's theorem and the fact
that the equality in Cauchy-Schwarz inequality yields linear
dependence of vectors. However, in the setting of Hilbert
$C^*$-modules Wigner's theorem has not been established generally
(it has been investigated only for some specific classes of
$C^*$-algebras, e.g.~\cite{b-g1, b-g2}). The uniqueness of $I$ in
the assertion of Theorem \ref{t1} remains an open problem.

\subsection*{Acknowledgments} The authors thank the anonymous
referee for his/her remarks; in particular for paying their
attention to the paper \cite{frank}.

The second author was in part supported by the Ministry of
Science, Education and Sports of the Republic of Croatia (project
No. 037-0372784-2757).


\end{document}